\theoremstyle{plain}
\newtheorem{theorem}{\indent\sc Theorem}[section]
\newtheorem{proposition}[theorem]{\indent\sc Proposition}
\theoremstyle{definition}
\newtheorem{remark}[theorem]{\indent\sc Remark}
\newtheorem{example}[theorem]{\indent\sc Example}
\newcommand\on{\operatorname}
\renewcommand\div{\on{div}}
\newcommand\Hess{\on{Hess}}
\newcommand\Ric{\on{Ric}}
\newcommand\scal{\on{scal}}
\newcommand\func{\operatorname}
\newcommand\grad{\func{grad}}
\begin{document}

\title{Remarks on Einstein solitons \\with certain types of potential vector field}
\author{Adara M. Blaga and Dan Radu La\c tcu}
\date{}
\maketitle

\centerline{\textit{\small{Dedicated to the memory of Professor Dr. Aurel Bejancu}}}

\begin{abstract}
We consider almost Einstein solitons $(V,\lambda)$ in a Riemannian manifold when $V$ is a gradient, a solenoidal or a concircular vector field. We explicitly express the function $\lambda$ by means of the gradient vector field $V$ and illustrate the result with suitable examples. Moreover, we deduce some geometric properties when the Ricci curvature tensor of the manifold satisfies certain symmetry conditions.
\end{abstract}

\markboth{{\small\it {\hspace{2cm} Remarks on Einstein solitons}}}{\small\it{Remarks on Einstein solitons
\hspace{2cm}}}

%%%%%%%%%%%%%%% footnote %%%%%%%%%%%%%%%%
\footnote{%2010 MSC numbers
2020 \textit{Mathematics Subject Classification}. 35Q51, 53B25, 53B50.}
\footnote{%key words and phrases
\textit{Key words and phrases}. Einstein solitons, gradient vector field, concircular vector field.}

\bigskip

\section{Introduction}

In the 60', J. Eells and J. H. Sampson \cite{eesa} introduced the harmonic map flow which was the starting point of the theory of geometric flows.
Geometric flows deform in time a geometrical object (a metric or other map) by preserving certain properties of the initial one (such as the homotopy class).
For example, a Riemannian metric is being deformed by a geometric flow, such as the Ricci flow, in order to smooth out its singularities. Important in the study of the geometric flows are the solitons,  stationary solutions of the associated evolution equation.
In particular, Einstein solitons correspond to self-similar solutions of the nonlinear equation
\begin{equation}\label{g}
\frac{\partial }{\partial t}g(t)=-2\left(\Ric(g(t))-\frac{\scal}{2}g(t)\right),
\end{equation}
where $\Ric$ is the Ricci curvature tensor and $\scal$ is the scalar curvature of $g$. A special class of solutions for (\ref{g}) is constituted by gradient Einstein solitons, initially studied by G. Catino and L. Mazzieri \cite{cat}.

In this paper, we deduce some properties of almost Einstein solitons with gradient, solenoidal or torse-forming potential vector field. A central result is Theorem \ref{p1}, where for a gradient almost Einstein soliton, we explicitly compute the function $\lambda$ by means of $V$, without making all the laborious curvature computations.
We prove that if $V$ is a solenoidal vector field, then $|\Ric|^2\geq |\nabla V|^2$.
Notice that the solenoidal vector fields of gradient type are the gradients of harmonic functions, present in different branches of physical sciences.
We end our study by making some remarks on Ricci symmetric and Ricci semi-symmetric manifolds admitting almost Einstein solitons with concircular vector field. Properties of almost Riemann, Ricci and Yamabe solitons, as well as some of their generalizations, have been previously studied in \cite{bla, blaoz}.

\section{Gradient almost Einstein solitons}

On an $n$-dimensional smooth manifold $M$, a Riemannian metric $g$ and a non-vanishing vector field $V$ is said to define \textit{an Einstein soliton} \cite{ham} if there exists a real constant $\lambda$ such that
\begin{equation}\label{1}
\frac{1}{2}\pounds_{V}g+\func{Ric}=\left(\frac{\scal}{2}+\lambda\right) g,
\end{equation}
where $\pounds _{V}$ denotes the Lie derivative operator in the direction of the vector field $V$, $\Ric$ is the Ricci curvature tensor and $\scal$ is the scalar curvature of $g$.
If $\lambda$ is a smooth function on $M$, we say that $(V,\lambda)$ defines an \textit{almost Einstein soliton}. Moreover, if the vector field $V$ is of gradient type, we say that $(V,\lambda)$ defines a \textit{gradient almost Einstein soliton}.

Taking the trace of the soliton equation (\ref{1}), we obtain
\begin{equation}\label{9}
\scal=\frac{2}{n-2}[\div(V)-n\lambda],
\end{equation}
provided $n\geq 3$.
If $n=2$, then $\lambda=\frac{\div(V)}{2}$.

For $n\geq 3$, (\ref{1}) becomes
\begin{equation}\label{k}
\frac{1}{2}\pounds _{V}g+\func{Ric}=\frac{1}{n-2}[\div(V)-2\lambda]g.
\end{equation}

If $V$ is a solenoidal vector field, i.e. $\div(V)=0$, then
$$\lambda=-\frac{n-2}{2n}\scal.$$

From the Bochner formula, for any vector field $V$ on $M$ of gradient type, we have \cite{blag}:
\begin{equation}\label{15}
\Ric(V,V)=\frac{1}{2}\Delta(|V|^2)-|\nabla V|^2-V(\div(V)).
\end{equation}

If $n\geq 3$, replacing from (\ref{9}) the scalar curvature in (\ref{1}) and computing the relation in $(V,V)$, we have:
\begin{equation}\label{43}
\Ric(V,V)=-\frac{1}{2}V(|V|^2)+\frac{1}{n-2}|V|^2\cdot \div(V)-\frac{2}{n-2}|V|^2\cdot \lambda.
\end{equation}

In view of (\ref{15}) and (\ref{43}), we can state:

\begin{theorem}\label{p1}
If $(V,\lambda)$ defines a gradient almost Einstein soliton on the $n$\text{-}di\-men\-sio\-nal Riemannian manifold $(M,g)$, $n\geq 3$, then
the function $\lambda$ can be explicitly expressed in terms of $V$:
$$\lambda=-\frac{n-2}{4|V|^2}[\Delta(|V|^2)-2|\nabla V|^2+V(|V|^2)-2V(\div(V))]+\frac{1}{2}\div(V).$$
\end{theorem}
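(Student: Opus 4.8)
The plan is to equate the two expressions for $\Ric(V,V)$ that the paper has already derived, namely the Bochner-type identity \eqref{15} and the soliton-derived identity \eqref{43}, and then solve the resulting equation for $\lambda$. Since $V$ is assumed non-vanishing we have $|V|^2 > 0$, so division by $|V|^2$ is legitimate, which is exactly what makes the final closed form possible.

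Concretely, the steps are as follows. First I would write down \eqref{15}:
$$
\Ric(V,V)=\tfrac12\Delta(|V|^2)-|\nabla V|^2-V(\div(V)),
$$
and \eqref{43}:
$$
\Ric(V,V)=-\tfrac12 V(|V|^2)+\tfrac{1}{n-2}|V|^2\,\div(V)-\tfrac{2}{n-2}|V|^2\,\lambda .
$$
Second, I would set the right-hand sides equal and isolate the term containing $\lambda$, giving
$$
\tfrac{2}{n-2}|V|^2\,\lambda=-\tfrac12\Delta(|V|^2)+|\nabla V|^2+V(\div(V))-\tfrac12 V(|V|^2)+\tfrac{1}{n-2}|V|^2\,\div(V).
$$
Third, I would multiply both sides by $\tfrac{n-2}{2|V|^2}$ and simplify: the last term contributes $\tfrac12\div(V)$, while the bracketed curvature/Laplacian terms combine, after factoring out $-\tfrac{n-2}{4|V|^2}$, into
$$
-\frac{n-2}{4|V|^2}\bigl[\Delta(|V|^2)-2|\nabla V|^2+V(|V|^2)-2V(\div(V))\bigr]+\tfrac12\div(V),
$$
which is precisely the claimed formula. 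A quick sign check on each of the four bracketed terms against the rearranged equation finishes the argument.

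There is essentially no obstacle here: the result is a direct algebraic consequence of two identities already established in the excerpt, and the only hypothesis actually used beyond those is that $V$ is non-vanishing (so that $|V|^2$ is invertible pointwise) together with $n \geq 3$ (so that \eqref{9}, \eqref{43} and the factor $\tfrac{1}{n-2}$ make sense). The mild bookkeeping risk is simply keeping the signs and the coefficient $\tfrac{n-2}{4}$ straight when clearing denominators; I would double-check by substituting the final expression for $\lambda$ back into \eqref{43} and confirming it reproduces \eqref{15}.
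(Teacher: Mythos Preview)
Your proposal is correct and is exactly the argument the paper intends: the theorem is stated immediately after \eqref{15} and \eqref{43} with the remark ``In view of \eqref{15} and \eqref{43}, we can state,'' so equating those two expressions for $\Ric(V,V)$ and solving for $\lambda$ is precisely the paper's proof. Your algebra and sign bookkeeping are also correct.
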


Now we verify this result with two examples.

\begin{example}
Let $M=\{(x,y,z)\in\mathbb{R}^3| z>0\}$, where $(x,y,z)$ are the standard coordinates in $\mathbb{R}^3$, and consider the Riemannian metric:
$$g:=\frac{1}{z^2}(dx\otimes dx+dy\otimes dy+dz\otimes dz).$$

Then an orthonormal frame field on $TM$ is given by:
$$E_1=z\frac{\partial}{\partial x}, \ \ E_2=z\frac{\partial}{\partial y}, \ \ E_3=z\frac{\partial}{\partial z}.$$

By Koszul's formula, we determine the components Levi-Civita connection:
$$\nabla_{E_1}E_1=E_3, \ \ \nabla_{E_1}E_2=0, \ \ \nabla_{E_1}E_3=-E_1,$$
$$\nabla_{E_2}E_1=0, \ \ \nabla_{E_2}E_2=E_3, \ \ \nabla_{E_2}E_3=-E_2,$$
$$\nabla_{E_3}E_1=0, \ \ \nabla_{E_3}E_2=0, \ \ \nabla_{E_3}E_3=0.$$

Consequently, the components of the Riemann and Ricci curvatures are:
$$R(E_1,E_2)E_2=-E_1=R(E_1,E_3)E_3, \ \ R(E_2,E_1)E_1=-E_2=R(E_2,E_3)E_3,$$
$$R(E_3,E_1)E_1=-E_3=R(E_3,E_2)E_2, \ \ R(E_i,E_j)E_k=0, \ \ \text{for} \ \ i\neq j, j\neq k, k\neq i,$$
$$\Ric(E_i,E_i)=-2, \ \ \Ric(E_i,E_j)=0,  \ \ \text{for} \ \ i\neq j$$
and the scalar curvature of $M$ equals to $-6$.

\pagebreak

For $V=\frac{1}{z}E_3$, the Lie derivative of $g$ in the direction of $V$ is given by:
$$(\pounds _{V}g)(E_1,E_1)=(\pounds _{V}g)(E_2,E_2)=(\pounds _{V}g)(E_3,E_3)=-\frac{2}{z}$$
and from the Einstein soliton's equation we obtain that
the pair $(V=\frac{\partial}{\partial z}, \lambda=-\frac{1}{z}+1)$ defines a gradient almost Einstein soliton, with $V=\grad(f)$, for $f(x,y,z)=-\frac{1}{z}$.

On the other hand, one can check that $|V|^2=\frac{1}{z^2}$, $V(|V|^2)=-\frac{2}{z^3}$, $\Delta(|V|^2)=\frac{8}{z^2}$, $|\nabla V|^2=\frac{3}{z^2}$, $\div(V)=-\frac{3}{z}$, $V(\div(V))=\frac{3}{z^2}$ therefore, $\lambda=-\frac{1}{z}+1$ is obtained from Theorem \ref{p1}, without making all the above laborious curvature computations.
\end{example}

\begin{example}
Let $M=\{(x,y,z)\in\mathbb{R}^3| z>0\}$, where $(x,y,z)$ are the standard coordinates in $\mathbb{R}^3$, and consider the Riemannian metric:
$$g:=e^{2z}(dx\otimes dx+dy\otimes dy)+dz\otimes dz.$$

Then an orthonormal frame field on $TM$ is given by:
$$E_1=e^{-z}\frac{\partial}{\partial x}, \ \ E_2=e^{-z}\frac{\partial}{\partial y}, \ \ E_3=\frac{\partial}{\partial z}.$$

By Koszul's formula, we determine the components Levi-Civita connection:
$$\nabla_{E_1}E_1=-E_3, \ \ \nabla_{E_1}E_2=0, \ \ \nabla_{E_1}E_3=E_1,$$
$$\nabla_{E_2}E_1=0, \ \ \nabla_{E_2}E_2=-E_3, \ \ \nabla_{E_2}E_3=E_2,$$
$$\nabla_{E_3}E_1=0, \ \ \nabla_{E_3}E_2=0, \ \ \nabla_{E_3}E_3=0.$$

Consequently, the components of the Riemann and Ricci curvatures are:
$$R(E_1,E_2)E_2=-E_1=R(E_1,E_3)E_3, \ \ R(E_2,E_1)E_1=-E_2=R(E_2,E_3)E_3,$$
$$R(E_3,E_1)E_1=-E_3=R(E_3,E_2)E_2, \ \ R(E_i,E_j)E_k=0, \ \ \text{for} \ \ i\neq j, j\neq k, k\neq i,$$
$$\Ric(E_i,E_i)=-2, \ \ \Ric(E_i,E_j)=0,  \ \ \text{for} \ \ i\neq j$$
and the scalar curvature of $M$ equals to $-6$.

For $V=e^zE_3$, the Lie derivative of $g$ in the direction of $V$ is given by:
$$(\pounds _{V}g)(E_1,E_1)=(\pounds _{V}g)(E_2,E_2)=(\pounds _{V}g)(E_3,E_3)=2e^z$$
and from the Einstein soliton's equation we obtain that
the pair $(V=e^z\frac{\partial}{\partial z}, \lambda=e^z+1)$ defines a gradient almost Einstein soliton, with $V=\grad(f)$, for $f(x,y,z)=e^z$.

On the other hand, one can check that $|V|^2=e^{2z}$, $V(|V|^2)=2e^{3z}$, $\Delta(|V|^2)=8e^{2z}$, $|\nabla V|^2=3e^{2z}$, $\div(V)=3e^z$, $V(\div(V))=3e^{2z}$ therefore, $\lambda=e^z+1$ is immediately obtained from Theorem \ref{p1}.
\end{example}

If $V=\grad{(f)}$, with $f$ a smooth function on $M$, equation (\ref{1}) becomes
\begin{equation}\label{e1}
\Hess(f)+\Ric=\left(\frac{\scal}{2}+\lambda\right)g
\end{equation}
and (\ref{9}) becomes
\begin{equation}\label{e3}
\scal=\frac{2}{n-2}[\Delta(f)-n\lambda]
\end{equation}
which by differentiating gives
\begin{equation}\label{e4}
d(\Delta(f))=\frac{n-2}{2}  d(\scal)+nd\lambda
\end{equation}
and by taking the gradient of (\ref{e3})
\begin{equation}\label{e6}
\grad(\Delta(f))=\frac{n-2}{2}  \grad(\scal)+n\grad(\lambda).
\end{equation}

Applying the divergence operator to (\ref{e1}) and using Schur's lemma, we obtain
\begin{equation}\label{he}
\div(\Hess(f))=d\lambda.
\end{equation}

Using the relation proved in \cite{blag}, namely
\begin{equation}\label{hh}
\div(\Hess(f))=d(\Delta(f))+i_{Q(\grad(f))}g,
\end{equation}
where $i$ denotes the interior product and $Q$ is the Ricci operator defined by $g(QX,Y):=\Ric(X,Y)$, from (\ref{he}) and (\ref{hh}) we get
\begin{equation}\label{e10}
d(\Delta(f))=d\lambda-i_{Q(\grad(f))}g.
\end{equation}

Comparing (\ref{e4}) and (\ref{e10}) and using (\ref{e6}), we obtain:
\begin{proposition}
If $(V=\grad(f),\lambda)$ defines a gradient almost Einstein soliton on the $n$\text{-}di\-men\-sio\-nal Riemannian manifold $(M,g)$, $n\geq 3$, then
\begin{equation}\label{ll}
Q(\grad(f))=-(n-1)\grad(\lambda)-\frac{n-2}{2}\grad(\scal).
\end{equation}
Moreover, if $\grad(f)\in \ker (Q)$, then
$$\grad(\lambda)=\grad(\Delta(f))=-\frac{n-2}{2(n-1)}\grad(\scal).$$
\end{proposition}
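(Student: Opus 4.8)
The plan is to assemble the identity \eqref{ll} purely by linear algebra from the three relations \eqref{e4}, \eqref{e6}, \eqref{e10} already derived in the excerpt. First I would recall what each of these gives: \eqref{e6} expresses $\grad(\Delta(f))$ as $\tfrac{n-2}{2}\grad(\scal)+n\grad(\lambda)$ (this is just the gradient form of \eqref{e3}), while \eqref{e10}, obtained from Schur's lemma via \eqref{he} and the Bochner-type identity \eqref{hh}, gives $d(\Delta(f))=d\lambda-i_{Q(\grad(f))}g$. Raising the index in \eqref{e10} with the metric turns $i_{Q(\grad(f))}g$ into $Q(\grad(f))$ and $d\lambda$ into $\grad(\lambda)$, so we get $\grad(\Delta(f))=\grad(\lambda)-Q(\grad(f))$.

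Next I would equate the two expressions for $\grad(\Delta(f))$: from \eqref{e6} it is $\tfrac{n-2}{2}\grad(\scal)+n\grad(\lambda)$, and from the index-raised \eqref{e10} it is $\grad(\lambda)-Q(\grad(f))$. Setting these equal and solving for $Q(\grad(f))$ yields
$$Q(\grad(f))=\grad(\lambda)-n\grad(\lambda)-\frac{n-2}{2}\grad(\scal)=-(n-1)\grad(\lambda)-\frac{n-2}{2}\grad(\scal),$$
which is exactly \eqref{ll}. For the "moreover" part, if $\grad(f)\in\ker(Q)$ then the left-hand side vanishes, so $-(n-1)\grad(\lambda)=\tfrac{n-2}{2}\grad(\scal)$, giving $\grad(\lambda)=-\tfrac{n-2}{2(n-1)}\grad(\scal)$; substituting this back into the index-raised \eqref{e10} (with $Q(\grad(f))=0$) gives $\grad(\Delta(f))=\grad(\lambda)$, and the chain of equalities in the statement follows.

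The only subtlety — and the one place I would be careful — is the passage between the $1$-form identities \eqref{e4}, \eqref{e10} and their gradient (vector-field) counterparts: one must note that $d$ and $\grad$ are related by the musical isomorphism, that $i_{Q(\grad f)}g$ is the $1$-form metrically dual to the vector field $Q(\grad f)$, and that the paper has already done exactly this translation once (compare \eqref{e4} with \eqref{e6}), so applying it to \eqref{e10} is legitimate. Everything else is a two-line linear combination; there is no real obstacle, and no curvature computation is needed beyond Schur's lemma, which was already invoked to obtain \eqref{he}.
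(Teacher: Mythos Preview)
Your proposal is correct and follows essentially the same route as the paper: the paper simply writes ``Comparing \eqref{e4} and \eqref{e10} and using \eqref{e6}'' before stating the proposition, which amounts to equating the two expressions for $d(\Delta(f))$ (or, after the musical isomorphism, for $\grad(\Delta(f))$) and solving for $Q(\grad(f))$, exactly as you do. The only cosmetic difference is that the paper compares the $1$-form identities first and then passes to gradients, whereas you raise indices first and then compare; the ``moreover'' part is handled identically.
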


In equation (\ref{k}), by taking the scalar product with $\Ric$, we get
$$\langle\pounds _{V}g,\Ric\rangle=-2|\Ric|^2+\frac{4}{(n-2)^2}[(\div(V))^2-(n+2)\div(V)\cdot\lambda+2n\lambda^2]$$
and by taking the scalar product with $\pounds _{V}g$, we get
$$\langle\pounds _{V}g,\Ric\rangle=-\frac{1}{2}|\pounds _{V}g|^2+\frac{2}{n-2}[(\div(V))^2-2\div(V)\cdot\lambda].$$

Since $|\pounds _{V}g|^2=4|\nabla V|^2$, if $V$ is a solenoidal vector field, then
$$|\nabla V|^2=|\Ric|^2-\frac{4n}{(n-2)^2}\lambda^2$$
which implies

\begin{proposition}
If $(V,\lambda)$ defines an almost Einstein soliton on the $n$\text{-}di\-men\-sio\-nal Riemannian manifold $(M,g)$, $n\geq 3$, with solenoidal potential vector field $V$, then
$$|\Ric|^2\geq |\nabla V|^2.$$
\end{proposition}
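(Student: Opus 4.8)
The plan is to start directly from the two scalar-product identities that the excerpt has already derived from equation (\ref{k}), namely
\[
\langle\pounds_{V}g,\Ric\rangle=-2|\Ric|^2+\frac{4}{(n-2)^2}\bigl[(\div(V))^2-(n+2)\div(V)\cdot\lambda+2n\lambda^2\bigr]
\]
and
\[
\langle\pounds_{V}g,\Ric\rangle=-\frac{1}{2}|\pounds_{V}g|^2+\frac{2}{n-2}\bigl[(\div(V))^2-2\div(V)\cdot\lambda\bigr].
\]
First I would impose the hypothesis $\div(V)=0$ on a solenoidal potential vector field, which collapses both right-hand sides considerably: the first becomes $-2|\Ric|^2+\frac{8n}{(n-2)^2}\lambda^2$ and the second becomes $-\frac12|\pounds_{V}g|^2$.

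Next I would equate the two expressions for $\langle\pounds_{V}g,\Ric\rangle$ and substitute the pointwise identity $|\pounds_{V}g|^2=4|\nabla V|^2$ recorded just above the statement; after dividing by $-2$ this yields exactly
\[
|\nabla V|^2=|\Ric|^2-\frac{4n}{(n-2)^2}\lambda^2,
\]
which is the displayed intermediate formula in the excerpt. The final step is then immediate: since $n\geq 3$ the coefficient $\frac{4n}{(n-2)^2}$ is nonnegative and $\lambda^2\geq 0$ (indeed $\lambda$ is real-valued, and in the constant case it is forced by $\lambda=-\frac{n-2}{2n}\scal$), so the subtracted term is nonnegative and hence $|\Ric|^2\geq|\nabla V|^2$ pointwise on $M$.

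There is essentially no obstacle here; the only thing to be careful about is bookkeeping — verifying that setting $\div(V)=0$ in the two quadratic-in-$\lambda$ expressions is done correctly and that the factor $\tfrac14$ in $|\pounds_V g|^2=4|\nabla V|^2$ is tracked, since a sign or factor slip would break the cancellation. One may also remark that equality $|\Ric|^2=|\nabla V|^2$ holds precisely when $\lambda\equiv 0$, i.e. when $(V,\lambda)$ is a genuine (not merely almost) Einstein soliton with solenoidal potential, equivalently when $\scal\equiv 0$; this observation is worth including but is not needed for the inequality itself.
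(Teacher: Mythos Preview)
Your argument is exactly the paper's: it equates the two displayed expressions for $\langle\pounds_{V}g,\Ric\rangle$, sets $\div(V)=0$, invokes $|\pounds_{V}g|^2=4|\nabla V|^2$ to obtain $|\nabla V|^2=|\Ric|^2-\frac{4n}{(n-2)^2}\lambda^2$, and reads off the inequality. Your remark on the equality case is a small bonus not stated in the paper.
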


\section{Almost Einstein solitons with concircular potential vector field}

Assume next that $V$ is a torse-forming vector field, i.e. $\nabla V=aI+\psi\otimes V$, where $a$ is a smooth function, $\psi$ is a $1$-form, $\nabla$ is the Levi-Civita connection of $g$ and $I$ is the identity endomorphism on the space of vector fields.
Then \cite{bla}
$$\div(V)=na+\psi(V),$$
$$
\pounds _{V}g=2ag+\psi\otimes \theta+\theta\otimes \psi,
$$
where $\theta:=i_{V}g$ and from (\ref{k}), we get
\begin{equation}\label{7}
\Ric=\frac{1}{n-2}[\psi(V)+2(a-\lambda)]g-\frac{1}{2}(\psi\otimes \theta+\theta\otimes \psi),
\end{equation}
$$Q=\frac{1}{n-2}[\psi(V)+2(a-\lambda)]I-\frac{1}{2}(\psi\otimes V+\theta\otimes \zeta),$$
hence
$$\scal=\frac{2}{n-2}[\psi(V)+n(a-\lambda)],$$
where $\psi:=i_{\zeta}g$.

\pagebreak

Computing the Riemann curvature, for $\nabla V=aI+\psi\otimes V$, we get
$$R(X,Y)V=(da-a\psi)(X)Y-(da-a\psi)(Y)X+[(\nabla_X\psi)Y-(\nabla_Y\psi)X]V,$$
for any $X,Y\in \mathfrak{X}(M)$.

Moreover, if $\psi$ is a Codazzi tensor field, i.e. $(\nabla_X\psi)Y=(\nabla_Y\psi)X$, for any $X,Y\in\mathfrak{X}(M)$, then
\begin{equation}\label{5}
\func{Ric}(V,V)=(1-n)[V(a)-a\psi(V)].
\end{equation}

But if $(V,\lambda)$ defines an almost Einstein soliton with torse-forming potential vector
field $V$, from (\ref{7}) we have
\begin{equation}\label{6}
\func{Ric}(V,V)=\frac{|V|^2}{n-2}[2(a-\lambda)-(n-3)\psi(V)],
\end{equation}%
and we can state:

\begin{proposition}
\label{p} If $(V,\lambda)$ defines an almost Einstein
soliton on the $n$-di\-men\-sional Riemannian manifold $(M,g)$, $n\geq 3$, such that the potential vector field $V$ is torse-forming and $\psi$ is a Codazzi tensor field, then
$$\lambda=a+\frac{(n-1)(n-2)}{2|V|^2}V(a)-\frac{1}{2|V|^2}[(n-1)(n-2)a+(n-3)|V|^2]\psi(V).$$

Moreover, if $V$ is a concircular vector field, $\nabla V=aI$ and $a$ is a non zero constant, then

i) $\lambda=a$;

ii) $M$ is a Ricci-flat manifold (i.e. $\Ric=0$).
\end{proposition}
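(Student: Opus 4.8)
The plan is to combine the two expressions for $\Ric(V,V)$ that have just been derived. Under the stated hypotheses, equation (\ref{5}) (which uses that $V$ is torse-forming and $\psi$ is a Codazzi tensor field) gives $\Ric(V,V)=(1-n)[V(a)-a\psi(V)]$, while equation (\ref{6}) (which uses only that $(V,\lambda)$ is an almost Einstein soliton with torse-forming potential) gives $\Ric(V,V)=\frac{|V|^2}{n-2}[2(a-\lambda)-(n-3)\psi(V)]$. First I would equate these two right-hand sides. Since the potential vector field $V$ is non-vanishing we have $|V|^2>0$, so I may multiply through by $\frac{n-2}{|V|^2}$ and isolate $2(a-\lambda)$, obtaining
$$2(a-\lambda)=(n-3)\psi(V)+\frac{(1-n)(n-2)}{|V|^2}[V(a)-a\psi(V)].$$
Solving this linear relation for $\lambda$ and regrouping the two terms that contain $\psi(V)$ yields exactly the asserted formula
$$\lambda=a+\frac{(n-1)(n-2)}{2|V|^2}V(a)-\frac{1}{2|V|^2}\big[(n-1)(n-2)a+(n-3)|V|^2\big]\psi(V).$$

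For the "moreover" part, I would specialize to a concircular potential: $\nabla V=aI$ means that the $1$-form $\psi$ vanishes identically, hence $\psi$ is (trivially) a Codazzi tensor field and $\psi(V)=0$; moreover $a$ being a non-zero constant forces $V(a)=0$. Substituting $\psi(V)=0$ and $V(a)=0$ into the formula for $\lambda$ gives immediately i) $\lambda=a$. (Equivalently one can read this off directly from (\ref{5}) and (\ref{6}): with $\psi=0$ and $a$ constant the former gives $\Ric(V,V)=0$ and the latter gives $\Ric(V,V)=\frac{2|V|^2}{n-2}(a-\lambda)$, so $|V|^2>0$ forces $\lambda=a$.) For ii), I would go back to (\ref{7}), the expression for $\Ric$ of a torse-forming potential of an almost Einstein soliton; setting $\psi=0$ there leaves $\Ric=\frac{2(a-\lambda)}{n-2}g$, and substituting $\lambda=a$ from part i) gives $\Ric=0$, i.e.\ $M$ is Ricci-flat.

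The argument is essentially algebraic bookkeeping once (\ref{5}), (\ref{6}) and (\ref{7}) are available, so I do not expect a serious obstacle. The only point that genuinely needs care is that the division by $|V|^2$ is legitimate, which is guaranteed by the standing assumption that the potential vector field is non-vanishing; a close second is keeping the sign of $(1-n)$ versus $(n-1)$ straight when transposing the Codazzi expression, since that is where an error would most easily enter the final formula.
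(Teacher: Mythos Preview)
Your proposal is correct and follows exactly the approach of the paper: equate the two expressions (\ref{5}) and (\ref{6}) for $\Ric(V,V)$ and solve for $\lambda$, then specialize to $\psi=0$ with $a$ constant and feed $\lambda=a$ back into (\ref{7}) to obtain $\Ric=0$. Your write-up is simply a more detailed version of the paper's two-line proof, with the algebra made explicit and the legitimacy of dividing by $|V|^2$ noted.
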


\begin{proof}
From (\ref{5}) and (\ref{6}) we get the expression of $\lambda$.

If $V$ is a concircular vector field and $a$ is a non zero constant, then
$\lambda=a$ and from (\ref{7}) we get $\Ric=0$.
\end{proof}

\begin{remark}
Let $(V,\lambda)$ define an almost Einstein soliton on $(M,g)$, $n\geq 3$, with $V$ torse-forming and $g$-orthogonal to $\zeta$, and $\psi$ a Codazzi tensor field.

i) Then $M$ is an almost quasi-Einstein manifold with associated functions $$\left(-(n-1)\frac{V(a)}{|V|^2},-\frac{1}{2}\right).$$

ii) Moreover, if $V$ is a concircular vector field, then $M$ is an almost Einstein manifold with associated function $$-(n-1)\frac{V(a)}{|V|^2}.$$
\end{remark}

Now differentiating covariantly (\ref{7}), we get
$$
(\nabla_X\Ric)(Y,Z)=\frac{1}{n-2}[X(\psi(V))+2X(a-\lambda)]g(Y,Z)
$$
\begin{equation}\label{44}
-\frac{1}{2}[\psi(Y)(\nabla_X\theta)Z+\psi(Z)(\nabla_X\theta)Y+\theta(Y)(\nabla_X\psi)Z+\theta(Z)(\nabla_X\psi)Y],
\end{equation}
for any $X,Y,Z\in \mathfrak{X}(M)$ and we can state:
\begin{proposition}
If $(V,\lambda)$ defines an almost Einstein soliton on $(M,g)$ with concircular potential vector field $V$, $\nabla V=aI$ and $a$ is a smooth function, then

i) $\nabla \Ric=0$ if and only if $d\lambda=da$;

ii) $\nabla \Ric =\theta\otimes \Ric$ if and only if $$\grad(\lambda-a)=(\lambda-a)V;$$

iii) $(\nabla_X\Ric)(Y,Z)=(\nabla_Y\Ric)(X,Z)$, for any $X,Y,Z\in \mathfrak{X}(M)$, if and only if $$d(\lambda-a)\otimes I=I\otimes d(\lambda-a);$$

iv) $(\nabla_X\Ric)(Y,Z)+(\nabla_Y\Ric)(Z,X)+(\nabla_Z\Ric)(X,Y)=0$, for any $X,Y,Z\in \mathfrak{X}(M)$, implies
$$\grad(\lambda-a)=-2V(\lambda-a)\frac{V}{|V|^2}.$$
\end{proposition}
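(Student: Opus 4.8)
The starting point is formula (\ref{44}), which in the concircular case $\nabla V = aI$ (so $\psi = 0$, $a$ a smooth function) collapses dramatically: with $\psi(V)=0$ and $\psi\otimes\theta+\theta\otimes\psi=0$, equation (\ref{7}) becomes $\Ric = \frac{2(a-\lambda)}{n-2}\,g$, and differentiating gives
$$
(\nabla_X\Ric)(Y,Z) = \frac{2}{n-2}\,X(a-\lambda)\,g(Y,Z) = -\frac{2}{n-2}\,d(\lambda-a)(X)\,g(Y,Z).
$$
So the entire proposition reduces to reading off when this rank-one-in-$X$ tensor satisfies each of the four stated symmetry conditions. I would state this reduced formula first as the common computational core, then handle i)–iv) in turn.

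For i), $\nabla\Ric = 0$ holds iff $d(\lambda-a)(X)\,g(Y,Z)\equiv 0$, and since $g$ is nondegenerate this is equivalent to $d(\lambda-a)=0$, i.e. $d\lambda = da$. For ii), $\nabla\Ric = \theta\otimes\Ric$ means $(\nabla_X\Ric)(Y,Z) = \theta(X)\Ric(Y,Z) = \theta(X)\cdot\frac{2(a-\lambda)}{n-2}g(Y,Z)$; comparing with the core formula, cancelling $\frac{2}{n-2}g(Y,Z)$, we need $-d(\lambda-a)(X) = (a-\lambda)\theta(X)$ for all $X$, i.e. $d(\lambda-a) = (\lambda-a)\theta$; raising the index with $g$ (recall $\theta = i_V g$, so $\theta^\sharp = V$) gives $\grad(\lambda-a) = (\lambda-a)V$, which is exactly (ii). For iii), the Codazzi-type condition $(\nabla_X\Ric)(Y,Z) = (\nabla_Y\Ric)(X,Z)$ becomes $d(\lambda-a)(X)g(Y,Z) = d(\lambda-a)(Y)g(X,Z)$; this is the statement that the $(0,3)$-tensor $d(\lambda-a)\otimes g$ is symmetric in its first two arguments, which one can phrase as $d(\lambda-a)\otimes I = I\otimes d(\lambda-a)$ in the paper's notation.

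For iv), the second Bianchi-type cyclic sum condition gives, after substituting the core formula and dividing by $\frac{2}{n-2}$,
$$
d(\lambda-a)(X)g(Y,Z) + d(\lambda-a)(Y)g(Z,X) + d(\lambda-a)(Z)g(X,Y) = 0
$$
for all $X,Y,Z$. The trick here is a well-chosen contraction: fix $X=Y$ and contract (trace over the remaining slot), or more directly set $Z = \grad(\lambda-a)$ and take $X = Y = V$; alternatively, trace the identity over the pair $(Y,Z)$ to get $n\,d(\lambda-a)(X) + 2\,d(\lambda-a)(X) = 0$... but that would force $d(\lambda-a)=0$, which is too strong, so instead one contracts against a vector rather than taking a full metric trace. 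The right move is: plug in $Y=Z=V$, giving $d(\lambda-a)(X)|V|^2 + 2\,d(\lambda-a)(V)\,\theta(X) = 0$, hence $d(\lambda-a)(X) = -\frac{2\,d(\lambda-a)(V)}{|V|^2}\theta(X)$; raising the index yields $\grad(\lambda-a) = -2\,V(\lambda-a)\frac{V}{|V|^2}$, which is (iv). The only genuine obstacle is keeping the identifications $\theta\leftrightarrow V$, $d(\lambda-a)\leftrightarrow\grad(\lambda-a)$ and the sign and factor $\frac{2}{n-2}$ straight through the substitutions; there is no hard analysis, just careful bookkeeping, and I would present the core formula once and then each part as a two-line deduction.
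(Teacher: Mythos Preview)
Your proposal is correct and follows exactly the paper's approach: specialize (\ref{44}) to the concircular case to obtain the core formula $(\nabla_X\Ric)(Y,Z)=\frac{2}{n-2}X(a-\lambda)g(Y,Z)$, and then read off each of i)--iv). The paper's own proof simply records this formula and declares the conclusions ``easily'' follow, so your write-up is in fact considerably more detailed than what appears there; your substitution $Y=Z=V$ for part iv) is the natural choice and yields the stated conclusion directly.
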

\begin{proof}
If $V$ is a concircular vector field, from (\ref{44}) we get
$$(\nabla_X\Ric)(Y,Z)=\frac{2}{n-2}X(a-\lambda)g(Y,Z),$$
for any $X,Y,Z\in \mathfrak{X}(M)$ and we easily get the conclusions.
\end{proof}

\bigskip

\textit{Adara M. Blaga}

%\textit{Department of Mathematics}

\textit{West University of Timi\c{s}oara}

\textit{Bld. V. P\^{a}rvan nr. 4, 300223, Timi\c{s}oara, Rom\^{a}nia}

\textit{adarablaga@yahoo.com}

\bigskip

\textit{Dan Radu La\c tcu}

%\textit{Department of Mathematics}

\textit{West University of Timi\c{s}oara}

\textit{Bld. V. P\^{a}rvan nr. 4, 300223, Timi\c{s}oara, Rom\^{a}nia}

\textit{latcu07@yahoo.com}

\end{document}